\RequirePackage{fix-cm}
\documentclass[smallextended,envcountsect,envcountsame]{svjour3} 

\smartqed 

\usepackage{amsmath,amssymb,mathtools}
\usepackage{microtype}
\usepackage[hidelinks]{hyperref}
\newcommand{\doi}[1]{{\urlstyle{same}\url{https://doi.org/#1}}}

\AddToHook{env/proof/end}{\qed}

\newcommand{\bR}{\mathbb{R}}
\newcommand{\bZ}{\mathbb{Z}}
\newcommand{\ri}{\operatorname{ri}}
\newcommand{\relbd}{\operatorname{relbd}}
\newcommand{\bdry}{\operatorname{bdry}}
\newcommand{\interior}{\operatorname{int}}

\newcommand{\cl}{\operatorname{cl}}
\newcommand{\cone}{\operatorname{cone}}
\newcommand{\dom}{\operatorname{dom}}
\newcommand{\epi}{\operatorname{epi}}
\newcommand{\dist}{\operatorname{dist}}

\newcommand{\dimH}{\operatorname{dim}_{\mathrm H}}
\DeclareMathOperator*{\argmin}{arg\,min}
\DeclareMathOperator*{\argmax}{arg\,max}

\begin{document}

\title{Solution Set Dimensions in Convex Optimization and Primal--Polar Facial Signatures%
}
\subtitle{}
\titlerunning{Solution Set Dimensions and Facial Signatures} 

\author{Jiaming Ma \and Defeng Sun}
\authorrunning{Jiaming Ma, Defeng Sun} 

\institute{Jiaming Ma \at
Department of Applied Mathematics, The Hong Kong Polytechnic University\\
\email{jiaming.ma@connect.polyu.hk}
\and
Defeng Sun \at
Department of Applied Mathematics, The Hong Kong Polytechnic University\\
\email{defeng.sun@polyu.edu.hk}
}

\date{Received: date / Accepted: date}

\maketitle

\begin{abstract}
We study the possible dimensions of solution sets in linearly perturbed convex optimization problems and their behavior under convex duality. The dimensions of solution sets and inverse solution sets capture two complementary forms of nonuniqueness: that of optimal solutions for a given perturbation, and that of perturbations for which a given point is optimal. Although these two families are linked by Fenchel conjugacy, we show that their collections of dimensions can be prescribed independently for finite supercoercive convex functions. As a geometric consequence, we prove that the facial dimension signatures of a convex body and its polar can be prescribed independently. The connection between these two realization results is obtained by homogenizing the epigraphs of a function and its conjugate. In contrast, under simultaneous facial exposedness, we establish a dimensional obstruction relating the smallest positive proper face dimensions of the body and its polar. These results resolve two open questions posed by Roshchina and Tun\c{c}el.
\keywords{Convex optimization \and Solution set dimensions
\and Fenchel conjugacy \and Facial dimension signatures}
\subclass{90C25 \and 52A20 \and 52A41}
\end{abstract}

\section{Introduction}

Let $f:\bR^m\to\bR\cup\{+\infty\}$ be a closed proper convex function. For a linear perturbation $u\in\bR^m$, consider the convex optimization problem
\begin{equation}
\label{eq:Pu-intro}
    \min_{x\in\bR^m}\; f(x)-\langle u,x\rangle.
\end{equation}
Its optimal solution set is
\[
    S_f(u):=\argmin_{x\in\bR^m}\{f(x)-\langle u,x\rangle\}.
\]
Linear perturbations and subdifferential mappings are classical objects in convex and variational analysis; see, for example, \cite{Rockafellar1970,RockafellarWets1998}. We write $g^*$ and $\partial g$ for the Fenchel conjugate and the subdifferential of a convex function $g$, respectively. By the Fenchel--Young inequality,
\[
    S_f(u)=\partial f^*(u),
    \qquad
    S_f^{-1}(x)=\partial f(x)=S_{f^*}(x).
\]
The geometry of subdifferential graphs has been studied in variational and nonsmooth analysis; see, for example, \cite{DrusvyatskiyIoffeLewis2012}, where dimensional properties of such graphs are investigated.

Here we ask a different elementary question concerning the geometry of optimal solution sets under linear perturbations. The dimension of $S_f(u)$ captures the extent of nonuniqueness of the optimal solutions for a perturbation $u$, while the dimension of $S_f^{-1}(x)$ captures the extent of nonuniqueness of the perturbations for which a point $x$ is optimal. What dimensions can occur in these two families as $u$ and $x$ vary, and are the two resulting collections subject to any compatibility conditions under convex duality?

Our first main result gives a complete answer: the two collections can be prescribed independently. More precisely, for any $I,J\subseteq\{0,\ldots,m\}$ containing $0$, we construct a convex function $f$ such that
\[
    \{\dim S_f(u):u\in\bR^m\}=I,
    \qquad
    \{\dim S_f^{-1}(x):x\in\bR^m\}=J.
\]

As an application of this realization result, we obtain a corresponding result for facial dimensions under polarity. Define the facial dimension signature of a convex set $C$ by
\[
    \sigma(C):=\{\dim F:F \text{ is a face of } C,\ F\ne\emptyset\}.
\]
Roshchina and Tun\c{c}el \cite{RoshchinaTuncel2025} proved that every finite set of nonnegative integers can be realized as the facial dimension signature of a facially exposed convex set. They further asked whether two prescribed facial dimension signatures can be realized simultaneously by a compact convex set and its polar. We answer this question affirmatively: for any $I,J\subseteq\{0,\ldots,n\}$ with $\{0,n\}\subseteq I\cap J$, there exists a convex body $C\subseteq\bR^n$ with $0\in\interior C$ such that
\[
    \sigma(C)=I,\qquad
    \sigma(C^\circ)=J.
\]
Thus, the facial dimension signatures of a convex body and its polar can be prescribed independently, which answers the second open question in \cite[Section~4.4]{RoshchinaTuncel2025}.

Additional restrictions arise under facial exposedness. Suppose that $n\ge3$ and that $C$ and $C^\circ$ each have a positive-dimensional proper face. We prove that if both $C$ and $C^\circ$ are facially exposed, then
\[
    \min\bigl(\sigma(C)\setminus\{0\}\bigr)+\min\bigl(\sigma(C^\circ)\setminus\{0\}\bigr)\le n-1.
\]
As a consequence, we give a negative answer to the first open question in \cite[Section~4.4]{RoshchinaTuncel2025}.

The paper is organized as follows. In Section~\ref{sec:prelim}, we collect several geometric facts used throughout the paper. In Section~\ref{sec:realization}, we prove the simultaneous realization result for linearly perturbed convex optimization problems and derive the corresponding primal--polar realization theorem. In Section~\ref{sec:obstruction}, we establish the dimensional obstruction under facial exposedness.

\section{Preliminaries}
\label{sec:prelim}
\begin{sloppypar}
All spaces are finite-dimensional Euclidean spaces. Let $\interior S$, $\cl S$, and $\bdry S:=\cl S\setminus\interior S$ denote the interior, closure, and boundary of a set $S$, respectively. For a convex set $C$, we use $\dim C$, $\ri C$, $\relbd C$ for its dimension, relative interior, and relative boundary, respectively. A compact convex set with nonempty interior is called a convex body.
\end{sloppypar}

A convex subset $F\subseteq C$ is a \emph{face} of $C$, denoted by $F\unlhd C$, if whenever an open line segment $(x,y)$ with $x,y\in C$ intersects $F$, then $x,y\in F$. The empty set and $C$ itself are faces. All other faces are called proper. A face $F\unlhd C$ is \emph{exposed} if
\[
    F=\argmax_{x\in C}\langle u,x\rangle
\]
for some nonzero $u$. A convex set is \emph{facially exposed} if all its proper faces are exposed. For each $x\in C$, there is a unique face $F\unlhd C$ such that $x\in\ri F$, called the \emph{minimal face} containing $x$. Equivalently, it is the intersection of all faces of $C$ containing $x$. Following \cite{RoshchinaTuncel2025}, the \emph{facial dimension signature} of $C$ is defined by
\[
    \sigma(C):=\{\dim F:F\unlhd C,\ F\ne\emptyset\}.
\]

For a closed proper convex function $f:\bR^m\to\bR\cup\{+\infty\}$, its Fenchel conjugate is
\[
    f^*(u):=\sup_{x\in\bR^m}\{\langle u,x\rangle-f(x)\}.
\]
We use the standard notation $\dom f$ and $\epi f$ for its effective domain and epigraph. A convex function $f$ is \emph{supercoercive} if
\[
    \lim_{\|x\|\to\infty}\frac{f(x)}{\|x\|}=+\infty.
\]
For closed proper convex functions, $f$ is supercoercive if and only if $\dom f^*=\bR^m$; see \cite[Theorem~11.8]{RockafellarWets1998}. The subdifferential of $f$ at $x\in\dom f$ is
\[
    \partial f(x):=
    \{u\in\bR^m:f(y)\ge f(x)+\langle u,y-x\rangle,\ \forall y\in\bR^m\}.
\]
By \cite[Proposition~11.3]{RockafellarWets1998}, the equality condition of Fenchel--Young inequality gives
\begin{equation}
\label{eq:fenchel-subdiff}
    u\in\partial f(x)
    \quad\Longleftrightarrow\quad
    x\in\partial f^*(u)
    \quad\Longleftrightarrow\quad
    f(x)+f^*(u)=\langle u,x\rangle.
\end{equation}
If $f$ is finite and supercoercive, then for every $u\in\bR^m$,
\begin{equation}
\label{eq:Sf}
    S_f(u):=\argmin_{x\in\bR^m}\{f(x)-\langle u,x\rangle\}
    =\partial f^*(u)
\end{equation}
is a nonempty compact convex set. Denote the collection of possible dimensions of $S_f(u)$ by
\begin{equation}
\label{eq:Df}
    \mathcal D(f):=\{\dim S_f(u):u\in\bR^m\}.
\end{equation}

For a nonempty closed convex set $C\subseteq\bR^m$, its polar is
\[
    C^\circ:=\{u\in\bR^m:\langle u,x\rangle\le1,\ \forall x\in C\}.
\]
The cone generated by $C$ is denoted by
\[
    \cone C:=\{\lambda x:\lambda\ge0,\ x\in C\}.
\]
For a convex cone $K\subseteq\bR^m$, its polar cone is
\[
    K^\circ:=\{y\in\bR^m:\langle x,y\rangle\le0,\ \forall x\in K\}.
\]
The support function of $C$ is defined by
\[
    \delta_C^*(u):=\sup_{x\in C}\langle u,x\rangle,\qquad u\in\bR^m,
\]
which is the conjugate of the indicator function $\delta_C$. Its subdifferential is given by
\begin{equation}
\label{eq:support-subdiff}
    \partial\delta_C^*(u)=\argmax_{x\in C}\langle u,x\rangle.
\end{equation}
Thus, for $u\ne0$, $\partial\delta_C^*(u)$ is an exposed face of $C$ whenever it is nonempty, while $\partial\delta_C^*(0)=C$. Conversely, every exposed face of $C$ is of the form $\partial\delta_C^*(u)$ for some $u\in\bR^m$. In particular, if $C$ is facially exposed, then every nonempty face of $C$ can be represented in this form.

We next record the relation between the faces of epigraph and optimal solution sets.

\begin{lemma}
\label{lem:epi}
Let $f:\bR^m\to\bR$ be a convex function. Then
\[
    \sigma(\epi f)\setminus\{m+1\}
    =\bigcup_{u\in\bR^m}\sigma\bigl(\partial f^*(u)\bigr)
    =\bigcup_{u\in\bR^m}\sigma\bigl(S_f(u)\bigr).
\]
\end{lemma}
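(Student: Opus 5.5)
The plan is to identify the proper nonempty faces of $\epi f$ with the nonempty faces of the sets $S_f(u)$, via exposed faces of $\epi f$ coming from nonvertical supporting hyperplanes. The second equality is immediate from \eqref{eq:Sf}. Since $f$ is finite and convex on $\bR^m$, it is continuous, hence closed and proper. Therefore the Fenchel--Young characterization \eqref{eq:fenchel-subdiff} gives $S_f(u)=\partial f^*(u)$ for every $u$, even without supercoercivity. Such sets may be empty, but then they contribute nothing, since $\sigma$ only records nonempty faces. So the work is in the first equality.

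First I will note that $\epi f$ has nonempty interior, hence dimension $m+1$. Every proper face of $\epi f$ is disjoint from $\interior\epi f$, so it has dimension at most $m$. Thus $\sigma(\epi f)\setminus\{m+1\}$ is exactly the set of dimensions of proper nonempty faces of $\epi f$.

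Next I will describe the exposed faces of $\epi f$. Let $(v,t)\ne0$ be the normal of a supporting hyperplane, so that $\langle v,x\rangle+ts$ is maximized over $\epi f$ at some point. Since $s$ can be sent to $+\infty$, we need $t\le 0$. If $t=0$, then $\langle v,\cdot\rangle$ would be bounded above on $\dom f=\bR^m$, forcing $v=0$, which is excluded. So we may normalize to $t=-1$. The maximizers of $\langle v,x\rangle-s$ over $\epi f$ are then exactly
\[
G_v:=\{(x,f(x)):x\in S_f(v)\}.
\]
The projection $(x,s)\mapsto x$ restricted to the graph of $f$ over $S_f(v)$ is an affine bijection onto $S_f(v)$; its inverse is $x\mapsto(x,f(x))$, and $f$ is affine on $S_f(v)$. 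Hence faces of $G_v$ correspond to faces of $S_f(v)$ with the same dimensions.

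For the inclusion "$\supseteq$", let $F$ be a nonempty face of $S_f(u)$. Its image in $G_u$ is a face of $G_u$, and $G_u$ is a face of $\epi f$. By transitivity of faces, this image is a face of $\epi f$. It is proper because $G_u$ lies in the boundary of $\epi f$. Its dimension is $\dim F$.

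For the inclusion "$\subseteq$", which is the main step, let $F$ be a proper nonempty face of $\epi f$ and pick $z\in\ri F$. Since $F$ is proper, $z\notin\interior\epi f$, so $z$ is a boundary point. By the supporting hyperplane theorem, some hyperplane supports $\epi f$ at $z$. By the previous paragraph, the corresponding exposed face is $G_u$ for some $u$, and $z\in G_u$. I then use the standard fact that a face meeting $\ri F$ of a convex subset $F$ contains $F$. The argument: for any $y\in F$, $z$ lies in an open segment between $y$ and another point of $F$, so the face property forces $y\in G_u$. Hence $F\subseteq G_u$, and $F$ is a face of $\epi f$ contained in $G_u$, hence a face of $G_u$. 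Via the projection, $\dim F\in\sigma(S_f(u))$. Combining both inclusions gives the claimed equality.
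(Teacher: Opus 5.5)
Your proof is correct and follows the same route as the paper: every supporting hyperplane of $\epi f$ has normal $(u,-1)$ up to scaling, the corresponding exposed face is the graph of $f$ over $S_f(u)=\partial f^*(u)$ and is affinely isomorphic to it, and every proper face of $\epi f$ lies in such an exposed face and is a face of it. The differences are minor. You prove that last containment directly, via a supporting hyperplane at a relative interior point, where the paper cites Rockafellar's Section~18. You also justify $S_f(u)=\partial f^*(u)$ for non-supercoercive $f$ (allowing empty sets), which the paper's appeal to \eqref{eq:Sf} passes over.
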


\begin{proof}
Fix $u\in\bR^m$. For any $(x,t)\in\epi f$,
\[
    \langle u,x\rangle-t
    \le \langle u,x\rangle-f(x)
    \le f^*(u).
\]
By \eqref{eq:fenchel-subdiff}, equality holds if and only if $x\in\partial f^*(u)$ and $t=f(x)$. Hence
\[
    E_u:=\argmax_{(x,t)\in\epi f}\{\langle u,x\rangle-t\}
    =\{(x,f(x)):x\in\partial f^*(u)\}.
\]
Thus $E_u$ is an exposed face of $\epi f$ and is affinely isomorphic to $\partial f^*(u)$.

Since $\dom f=\bR^m$, every supporting hyperplane of $\epi f$ has a normal of the form $(u,-1)$ up to a positive multiple. Hence every exposed proper face of $\epi f$ is some $E_u$. Every proper face of a closed convex set is contained in an exposed face and is a face of that exposed face; see \cite[Section~18]{Rockafellar1970}. Therefore
\[
    \sigma(\epi f)\setminus\{m+1\}
    =\bigcup_{u\in\bR^m}\sigma(E_u)
    =\bigcup_{u\in\bR^m}\sigma\bigl(\partial f^*(u)\bigr)
    =\bigcup_{u\in\bR^m}\sigma\bigl(S_f(u)\bigr).
\]
The last equality follows from \eqref{eq:Sf}. 
\end{proof}

We will use the following consequence of the construction in \cite[Theorem~1.1]{RoshchinaTuncel2025}.

\begin{lemma}
\label{lem:AB}
Let $m\ge1$, and let $I,J\subseteq\{0,\ldots,m\}$ be nonempty sets containing $0$. There exist facially exposed compact convex sets $A,B\subseteq\bR^m$ such that
\[
    \sigma(A)=I,
    \qquad
    \sigma(B)=J,
\]
and $B$ can be chosen so that
\begin{equation}
\label{eq:sharp-b}
    \interior N_B(b)\ne\emptyset
\end{equation}
for some $b\in B$, where $N_B(b)$ denotes the normal cone to $B$ at $b$.
\end{lemma}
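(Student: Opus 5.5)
The starting point is \cite[Theorem~1.1]{RoshchinaTuncel2025}, which gives, for any finite set of nonnegative integers containing $0$, a facially exposed compact convex set realizing it as facial dimension signature. I apply it first to $I$ and then to $J$. If necessary, I embed the resulting sets in $\bR^m$: the construction there lives in a space of dimension $\max I$ (respectively $\max J$), which is at most $m$, and the facial structure and facial exposedness of a compact convex set are unchanged by an affine embedding into a larger space. This yields facially exposed compact $A,B\subseteq\bR^m$ with $\sigma(A)=I$ and $\sigma(B)=J$. The case $I=\{0\}$ or $J=\{0\}$ is handled by taking a single point.

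The remaining task is to arrange \eqref{eq:sharp-b}. I would do this by modifying $B$ so that it acquires a vertex $b$ whose normal cone has nonempty interior, while $\sigma(B)$ and facial exposedness are preserved. The simplest route is to check directly that the construction in \cite{RoshchinaTuncel2025} already has such a vertex. If I recall correctly, the sets there are built as convex hulls of pieces (simplices, balls or discs of various dimensions) glued so that every face is exposed, and these typically contain polyhedral vertices.

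If that check fails, a robust alternative is to take a full-dimensional model. Let $d=\max J$ and first realize $J$ by a facially exposed $B_0$ in $\bR^d$. If $d=m$, then $B_0$ has nonempty interior and is compact. Pick a point $b_0$ of $B_0$ maximizing $\|x\|$. Then truncate near $b_0$: intersect $B_0$ with a halfspace cutting off a small cap, and form the convex hull with a new apex just outside, or more cleanly take $\operatorname{conv}(B_0\cup\{p\})$ for a suitable $p$. This trick requires care, though. It creates new faces, namely segments from $p$ and cones over faces of $B_0$ visible from $p$, whose dimensions might not lie in $J$.

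The cleaner option is to realize $J\setminus\{m\}$ inside a facet-like slice and then form a pyramid. The pyramid has normal cone at its apex containing an open set, and its faces are the base faces together with the cones over them, which raises dimensions by one. Handling this dimension shift is exactly the bookkeeping I expect to be the main obstacle: one needs a base signature $J'$ with $J'\cup(J'+1)\cup\{0\}$ equal to $J$, and that is not always possible.

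For this reason I would ultimately rely on inspecting the explicit Roshchina--Tun\c{c}el construction. I expect it to be a convex hull of finitely many lower-dimensional balls placed in general position together with a full simplex, in which case any simplex vertex $b$ that is exposed by an open set of directions gives $\interior N_B(b)\ne\emptyset$. I would verify that such a vertex exists whenever $m\in J$ or, after embedding into $\bR^m$, whenever $B$ is not full-dimensional. In the latter case the normal cone automatically contains the orthogonal complement of the affine hull, so it suffices that $b$ be a vertex of $B$ with full normal cone within its affine hull.
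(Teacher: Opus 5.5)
\paragraph{Verdict.} There is a genuine gap: the step that makes up the real content of the lemma is never proved.

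\paragraph{What agrees with the paper.} Your first step matches the paper: apply \cite[Theorem~1.1]{RoshchinaTuncel2025} to $I$ and $J$, then embed affinely into $\bR^m$. Your closing remark also matches: when $B$ is lower-dimensional, $N_B(b)$ contains the orthogonal complement of $\operatorname{aff}B$, so a full-dimensional normal cone relative to $\operatorname{aff}B$ suffices.

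\paragraph{The gap.} You never produce a point $b$ with $\interior N_B(b)\ne\emptyset$ while keeping $\sigma(B)=J$ and facial exposedness. The two constructions you try, $\operatorname{conv}(B_0\cup\{p\})$ and pyramids, fail for the reasons you give yourself. Your fallback of ``inspecting'' the Roshchina--Tun\c{c}el sets rests on an unverified guess that cannot be right in general.

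Take $J=\{0,d\}$ with $d\ge2$. Then $B$ must be strictly convex, and a Euclidean ball, the obvious realization, has no point with a full-dimensional normal cone. Nor can a strictly convex set have the kind of vertex you expect. Suppose $B=\operatorname{conv}(\{b\}\cup B'')$ with $B''$ compact convex and $b\notin B''$; this is exactly what an extreme simplex vertex in a convex hull of pieces gives. Take any nonzero $v$ on the boundary of $N_B(b)$. Then $\max_{y\in B''}\langle v,y-b\rangle=0$, since otherwise $v$ would be interior by compactness. So the face exposed by $v$ contains $b$ and a point of $B''$, hence is a positive-dimensional proper face. Therefore $B$ must in general be modified, and the modification has to create a corner without putting any segment on the boundary.

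\paragraph{The missing idea.} It is a local surgery on a smooth, strictly convex piece of the boundary. For $d_B:=\max J\ge2$, the paper uses a boundary patch of the Roshchina--Tun\c{c}el set that agrees with a Euclidean sphere. It writes this patch as the graph of $\phi(z)=-(1-\|z\|^2)^{1/2}$ over $\|z\|\le\varepsilon$. It then replaces the patch by the graph of $\frac12\phi(z)+\frac12(\alpha\|z\|-1)$, with $\alpha$ chosen so that $\alpha\|z\|-1=\phi(z)$ on $\|z\|=\varepsilon$. This works for three reasons:
\begin{itemize}
\item $\alpha$ is a chord slope of the convex radial profile of $\phi$, so it does not exceed the radial slope of $\phi$ at $\|z\|=\varepsilon$; hence the glued set is still convex.
\item The new patch is strictly convex, so no new positive-dimensional faces appear and every new boundary point is an exposed point. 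This preserves both $\sigma(B)=J$ and facial exposedness.
\item The $\|z\|$ term creates a corner at $(0,-1)$ whose normal cone is full-dimensional in $\bR^{d_B}$.
\end{itemize}
With this step supplied, your embedding argument completes the proof.
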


\begin{proof}
Let $d_A:=\max I$ and $d_B:=\max J$. By \cite[Theorem~1.1]{RoshchinaTuncel2025}, there exist facially exposed compact convex sets with facial dimension signatures $I$ and $J$ in $\bR^{d_A}$ and $\bR^{d_B}$, respectively.

It remains to arrange \eqref{eq:sharp-b} for $B$. If $d_B\le1$, this is immediate. Suppose that $d_B\ge2$. In the construction of \cite[Theorem~1.1]{RoshchinaTuncel2025}, one may choose a boundary point near which the boundary agrees with that of a Euclidean ball. In suitable coordinates, write this part of the boundary as
\[
    \{(z,t):t=\phi(z):=-(1-\|z\|^2)^{1/2},\ \|z\|\le\varepsilon\}
\]
for some $\varepsilon\in(0,1)$. Set
\[
    \alpha:=\frac{1-(1-\varepsilon^2)^{1/2}}{\varepsilon},
\]
and replace this local part by
\[
    \left\{(z,t):
    t=\frac12\phi(z)+\frac12(\alpha\|z\|-1),\ \|z\|\le\varepsilon\right\}.
\]
The modified patch agrees with the original one at $\|z\|=\varepsilon$ and remains strictly convex. Hence the facial dimension signature is unchanged, and the resulting set remains facially exposed. Moreover, the term $\|z\|$ creates a corner at $(0,-1)$, so the normal cone there is full-dimensional in $\bR^{d_B}$.

Finally, embed the two sets affinely into $\bR^m$. Facial dimension signatures and facial exposedness are preserved under such embeddings, and a full-dimensional normal cone in $\bR^{d_B}$ yields a normal cone with nonempty interior in $\bR^m$. This proves \eqref{eq:sharp-b}.
\end{proof}

\section{Simultaneous realization under duality}
\label{sec:realization}

We first state the optimization result.

\begin{theorem}
\label{thm:optimization-realization}
Let $m\ge1$, and let $I,J\subseteq\{0,\ldots,m\}$ be nonempty sets containing $0$. Then there exists a finite supercoercive convex function $f:\bR^m\to\bR$ such that
\begin{equation}
\label{eq:D-realization}
    \mathcal D(f)=I,
    \qquad
    \mathcal D(f^*)=J.
\end{equation}
Moreover,
\begin{equation}
\label{eq:strong-realization}
    \bigcup_{u\in\bR^m}\sigma\bigl(S_f(u)\bigr)=I,
    \qquad
    \bigcup_{x\in\bR^m}\sigma\bigl(S_{f^*}(x)\bigr)=J.
\end{equation}
\end{theorem}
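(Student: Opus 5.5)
The plan is to build $f$ as a sum of two pieces. One piece produces flat solution sets shaped like the faces of $A$. The other produces flat subdifferentials shaped like the faces of $B$. I take $A,B\subseteq\bR^m$ from Lemma~\ref{lem:AB}, so that $\sigma(A)=I$, $\sigma(B)=J$, both sets are facially exposed, and $\interior N_B(b)\neq\emptyset$ at some $b\in B$.

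The basic building block is $\psi_A(x):=\tfrac12\dist^2(x,A)$. It is finite, convex and $C^1$ with $\nabla\psi_A(x)=x-P_A(x)$, and it grows quadratically, so it is supercoercive. Its solution sets are
\[
S_{\psi_A}(w)=\{x:x-P_A(x)=w\}=w+\argmax_{a\in A}\langle w,a\rangle .
\]
By \eqref{eq:support-subdiff} and facial exposedness, these are translates of all the nonempty faces of $A$, with $w=0$ giving $A$ itself. So $\bigcup_w\sigma(S_{\psi_A}(w))=I$, while $\partial\psi_A$ is single-valued. Dually, $\delta_B^*$ has $\partial\delta_B^*(x)$ ranging over all nonempty faces of $B$ as $x$ varies.

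I then set
\[
f:=\psi_{A'}+\delta_B^*,\qquad A':=c+\varepsilon A,
\]
where $c\in\interior N_B(b)$ and $\varepsilon>0$ is small enough that $A'$ together with a neighborhood lies in $\interior N_B(b)$. Since $\delta_B^*$ is finite and $\psi_{A'}$ is smooth,
\[
\partial f(x)=\bigl(x-P_{A'}(x)\bigr)+\argmax_{y\in B}\langle x,y\rangle .
\]
This is a translate of a face of $B$, and every nonempty face occurs. By Lemma~\ref{lem:epi} applied to $f^*$, or directly, this gives $\mathcal D(f^*)=J$ and the second identity in \eqref{eq:strong-realization}. Supercoercivity of $f$ follows from that of $\psi_{A'}$ together with $\delta_B^*\ge-\|\cdot\|\max_B\|y\|$.

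For the primal side, $S_f(u)=\{x:u\in\partial f(x)\}$. On $\interior N_B(b)$ the function $\delta_B^*$ equals the linear map $\langle b,\cdot\rangle$. Hence for $u=b+w$ with $\|w\|$ small, every $x\in A'$ plus a small normal displacement stays in the cone. There, $S_f(b+w)=S_{\psi_{A'}}(w)$, which is a translate of the face of $A'$ exposed by $w$. So every dimension in $I$, and every face-dimension of faces of $A$, is realized. One still has to check that these candidates are the full solution set: a convex set meeting the open region in exactly this face must equal it, because a solution set that is convex, contains a point of the region, and is locally equal to the face cannot leave the region.

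The main obstacle is the remaining $u$, those not of the form $b+w$ with small $w$. For these, $S_f(u)$ may mix the geometry of $A'$ and $B$ and could have dimensions outside $I$. I would handle this as follows.

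1. I write $S_f(u)=\{x: u-x+P_{A'}(x)\in\argmax_B\langle x,\cdot\rangle\}$. For $x\notin A'$ the map $x\mapsto x-P_{A'}(x)$ is injective on each normal ray. This suggests that $S_f(u)$ is a singleton whenever it avoids $A'$.
2. I then argue that whenever $S_f(u)$ meets $A'$, it lies in $\interior N_B(b)$, which brings it back to the good case treated above.

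If a stray dimension survives, I would modify $f$ to $\psi_{A'}+\lambda\delta_B^*$ or add $\tfrac{\mu}{2}\|x\|^2$ outside a neighbourhood of $A'$, so that $f$ is strictly convex off the cone region. Either change forces those $S_f(u)$ to be singletons ($0\in I$). These modifications alter $\partial f$ only by smooth translates and so keep $\mathcal D(f^*)=J$.
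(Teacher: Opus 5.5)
Your dual side is correct and matches the paper: since $\psi_{A'}$ is $C^1$, $\partial f(x)$ is a translate of $\argmax_{y\in B}\langle x,y\rangle$, which gives $\mathcal D(f^*)=J$ and the second identity. The gap is on the primal side, in the inclusion $\mathcal D(f)\subseteq I$ (and the first identity). For $f=\psi_{A'}+\delta_B^*$ this inclusion is not merely unproved; it is false in general. Since $\nabla\psi_{A'}=\mathrm{Id}-P_{A'}$ is firmly nonexpansive and $\partial\delta_B^*$ is monotone, $\nabla\psi_{A'}$ takes a single value $w$ on each $S_f(u)$. Consequently
\[
    S_f(u)=(w+F_w)\cap N_B(u-w),\qquad F_w:=\argmax_{a\in A'}\langle w,a\rangle .
\]
So your step 1 is wrong. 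The map $x\mapsto x-P_{A'}(x)$ is constant along the translated faces $w+F_w$, and these run transverse to the normal rays. Once $w$ is large, these translates leave $\interior N_B(b)$ and are cut by the normal fan of $B$.

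Here is a concrete instance in $\bR^3$, with $e_1,e_2,e_3$ the standard basis. Let $n=(e_1+e_3)/\sqrt2$ and let $A$ be the unit ball truncated by $|\langle x,n\rangle|\le h$; it is facially exposed with $\sigma(A)=\{0,2,3\}$. Take $B=[-1,1]^3$, $b=(1,1,1)$ and $A'=(10,10,10)+\varepsilon A$. For $t=10\sqrt2-\varepsilon h$, the disc $-tn+F_{-n}$ is centred at $(0,10,0)$ in the plane $x_1+x_3=0$. The normal cone $N_B(e_2)=\{(0,s,0):s\ge0\}$ meets this disc in a diameter. Hence $S_f(e_2-tn)$ is a segment, and $1\in\mathcal D(f)\setminus I$. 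Likewise $S_f((1,1,-1)-tn)$ is a half-disc with a $1$-dimensional face. Your step 2 is true, but it only says that $S_f(u)$ meets $A'$ iff $u=b$, so it does not help with these $u$.

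Your fallback does not close the gap. Replacing $\delta_B^*$ by $\lambda\delta_B^*$ leaves the normal fan unchanged, so the same example persists. ``Adding $\tfrac{\mu}{2}\|x\|^2$ outside a neighbourhood'' does not define a convex function as stated.

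The missing idea is a $C^1$ convex penalty that vanishes exactly on a \emph{strictly convex} set. The paper adds $q_D=\tfrac12\dist(\cdot,D)^2$, where $D$ is a closed Euclidean ball with $A\subseteq\interior D$ and $D\subseteq\interior N_B(b)$. This works in three steps.
\begin{itemize}
\item Because $q_D$ is $C^1$, the dual side is untouched.
\item $q_D$ is affine on a segment only if the segment lies in $D$. So every positive-dimensional $S_f(u)$ lies in $D$, where $\delta_B^*=\langle b,\cdot\rangle$. This gives $S_f(u)=H\cap D$ with $H$ a translate of a face of $A$.
\item Strict convexity of $D$ forces the relative-interior points of positive-dimensional faces of $H\cap D$ into $\interior D$. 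There $H\cap D$ agrees locally with $H$, so all face dimensions lie in $\sigma(A)=I$.
\end{itemize}
Both uses of the ball matter. For a polyhedral neighbourhood, the squared distance is constant along segments parallel to its facets, so confinement fails. Its facets could also cut chords out of faces of $A$, just as in the example above.
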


\begin{proof}
Choose facially exposed compact convex sets $A,B\subseteq\bR^m$ and $b\in B$ as in Lemma~\ref{lem:AB}. Let $D\subseteq\bR^m$ be a closed Euclidean ball such that
\[
    D\subseteq\interior N_B(b).
\]
After a translation and a positive scaling of $A$, we may assume that
\begin{equation}
\label{eq:A-in-D}
    A\subseteq\interior D.
\end{equation}
Define
\[
    q_A(x):=\frac12\dist(x,A)^2,\qquad q_D(x):=\frac12\dist(x,D)^2,
\]
and 
\begin{equation}
\label{eq:f-construction}
    f(x):=q_A(x)+q_D(x)+\delta_B^*(x).
\end{equation}
The support function $\delta_B^*$ is finite since $B$ is compact, and $q_D$ grows quadratically at infinity. Hence $f$ is finite and supercoercive, so $\dom f^*=\bR^m$. Moreover, since $\dom f=\bR^m$, $f^*$ is also supercoercive.

We first consider $S_f(u)=\partial f^*(u)$ for $u\in\bR^m$. Suppose that $\dim S_f(u)>0$. If $x,x'\in S_f(u)$ are distinct, then $f$ is affine on the closed segment $[x,x']$. Since all three terms in \eqref{eq:f-construction} are convex, each is affine on this segment. In particular, $q_D$ is affine on $[x,x']$. For the Euclidean ball $D$, this is possible only when
\[
    [x,x']\subseteq D.
\]
It follows that
\begin{equation}
\label{eq:positive-fiber-in-D}
    \dim S_f(u)>0\quad\Longrightarrow\quad S_f(u)\subseteq D.
\end{equation}

For $x\in D\subseteq\interior N_B(b)$, we have
$\partial q_D(x)=\{0\}$ and $\partial\delta_B^*(x)=\{b\}$, so
\[
    \partial f(x)=\partial q_A(x)+b.
\]
Hence, by \eqref{eq:fenchel-subdiff}, whenever $S_f(u)\subseteq D$,
\[
    S_f(u)=\partial q_A^*(u-b)\cap D.
\]
Since
\[
    q_A^*(z)=\frac12\|z\|^2+\delta_A^*(z),
\]
we obtain
\begin{equation}
\label{eq:fiber-formula-2}
    S_f(u)=\bigl(u-b+\partial\delta_A^*(u-b)\bigr)\cap D.
\end{equation}

We claim that
\begin{equation}
\label{eq:one-inclusion-I}
    \sigma(S_f(u))\subseteq\sigma(A)=I\qquad\forall u\in\bR^m.
\end{equation}
This is immediate if $\dim S_f(u)=0$, since $0\in I=\sigma(A)$. Otherwise, by \eqref{eq:positive-fiber-in-D}, $S_f(u)\subseteq D$. Let $G$ be a positive-dimensional face of $S_f(u)$ and choose $x\in\ri G$. Since $D$ is strictly convex, necessarily $x\in\interior D$. Set
\[
    H:=u-b+\partial\delta_A^*(u-b).
\]
By \eqref{eq:fiber-formula-2}, $S_f(u)=H\cap D$. Since $x\in\interior D$, the sets $S_f(u)$ and $H$ agree locally around $x$, and hence their minimal faces containing $x$ have the same dimension. Since $H$ is a translation of the face $\partial\delta_A^*(u-b)$ of $A$, every face of $H$ is a translation of a face of $A$. Thus $\dim G\in\sigma(A)$, and hence the claim \eqref{eq:one-inclusion-I} holds.

Conversely, let $d\in I=\sigma(A)$. Since $A$ is facially exposed, there exists $z\in\bR^m$ such that
\[
    \dim\partial\delta_A^*(z)=d,
\]
where $z=0$ may be used for the face $A$ itself. For every $x\in\bR^m$,
\[
    q_A(x)\ge0,\qquad q_D(x)\ge0,\qquad \delta_B^*(x)-\langle b,x\rangle\ge0.
\]
All three terms vanish on $A$, while $q_A(x)=0$ implies $x\in A$. Hence
\[
    S_f(b)=\argmin_{x\in\bR^m}\{q_A(x)+q_D(x)+(\delta_B^*(x)-\langle b,x\rangle)\}=A.
\]
Since $\dom f^*=\bR^m$, the mapping $\partial f^*$ is upper semicontinuous. By \eqref{eq:A-in-D}, there exists a neighborhood $U$ of $b$ such that
\[
    S_f(u)=\partial f^*(u)\subseteq\interior D,\qquad u\in U.
\]
By \eqref{eq:A-in-D}, the closed set $\partial\delta_A^*(z)$ is contained in $\interior D$. So, for all sufficiently small $\varepsilon>0$,
\[
    b+\varepsilon z\in U,\qquad \varepsilon z+\partial\delta_A^*(z)\subseteq\interior D.
\]
Using \eqref{eq:fiber-formula-2} and $\partial\delta_A^*(\varepsilon z)=\partial\delta_A^*(z)$, we obtain
\[
    S_f(b+\varepsilon z)=\varepsilon z+\partial\delta_A^*(z).
\]
Thus every $d\in I$ occurs as the dimension of $S_f(u)$ for some $u\in\bR^m$. Together with \eqref{eq:one-inclusion-I}, this gives
\[
    \mathcal D(f)=\bigcup_{u\in\bR^m}\sigma(S_f(u))=I.
\]

It remains to consider the conjugate problem. By \eqref{eq:fenchel-subdiff},
\[
    S_{f^*}(x)=\partial f(x).
\]
Since $q_A$ and $q_D$ are continuously differentiable,
\[
    \partial f(x)=\nabla q_A(x)+\nabla q_D(x)+\partial\delta_B^*(x).
\]
Thus $S_{f^*}(x)$ is a translation of the exposed face $\partial\delta_B^*(x)$ of $B$, and therefore
\[
    \sigma(S_{f^*}(x))\subseteq\sigma(B)=J.
\]
Conversely, since $B$ is facially exposed, every nonempty proper face of $B$ is of the form $\partial\delta_B^*(x)$ for some $x\in\bR^m$, while $B=\partial\delta_B^*(0)$. Consequently,
\[
    \mathcal D(f^*)=\bigcup_{x\in\bR^m}\sigma(S_{f^*}(x))=J.
\]
This proves \eqref{eq:D-realization} and \eqref{eq:strong-realization}.
\end{proof}


\begin{remark}
The identities in \eqref{eq:strong-realization} strengthen \eqref{eq:D-realization}: they prescribe not only the dimensions of the solution sets themselves, but also the dimensions of all their faces. This stronger form allows the realization result to pass to epigraphs through Lemma~\ref{lem:epi}.
\end{remark}

We now use Theorem~\ref{thm:optimization-realization} to derive the primal--polar realization result.

\begin{theorem}[Primal--polar realization]
\label{thm:polar-realization}
Let $I,J$ be nonempty subsets of $\bZ$ such that
\[
    \min I=\min J=0,
    \qquad
    \max I=\max J=n.
\]
Then there exists a convex body $C\subseteq\bR^n$ with $0\in\interior C$ such that
\[
    \sigma(C)=I,
    \qquad
    \sigma(C^\circ)=J.
\]
\end{theorem}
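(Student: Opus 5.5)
The plan is to deduce the statement from Theorem~\ref{thm:optimization-realization} with $m:=n-1$, by homogenizing the epigraphs of $f$ and $f^*$ into a pair of mutually polar cones in $\bR^{n+1}$ and then slicing. The case $n=0$ is trivial, since then $C=\{0\}=\bR^0$. For $n\ge1$, set
\[
  I':=I\setminus\{n\},\qquad J':=J\setminus\{n\}.
\]
Both are subsets of $\{0,\ldots,n-1\}$ and contain $0$. Theorem~\ref{thm:optimization-realization} gives a finite supercoercive convex $f:\bR^{n-1}\to\bR$ whose conjugate $f^*$ is also finite and supercoercive, and which satisfies \eqref{eq:strong-realization}. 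By Lemma~\ref{lem:epi}, applied once to $f$ and once to $f^*$,
\[
  \sigma(\epi f)=I'\cup\{n\}=I,\qquad \sigma(\epi f^*)=J.
\]

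Next, form the closed convex cone
\[
  K:=\cl\cone\{(x,t,1):(x,t)\in\epi f\}\subseteq\bR^{n+1}.
\]
Because $f$ is finite and supercoercive, the recession cone of $\epi f$ is $\{0\}\times\bR_+$. Hence the closure adds only the ray $R:=\bR_+(0,1,0)$, and $K$ is pointed with nonempty interior. Every proper face of $\epi f$ is an $E_u$ from the proof of Lemma~\ref{lem:epi}, so it is bounded. I expect the nonzero faces of $K$ to be exactly three kinds: the cones over nonempty bounded faces of $\epi f$ (dimension raised by one), the ray $R$, and $K$ itself. The ray contributes dimension $1$, which is already produced by the vertices of $\epi f$ since $0\in I$. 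Thus
\[
  \{\dim F-1:F\unlhd K,\ F\ne\{0\}\}=I.
\]

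A direct computation then identifies the polar cone. The condition $\langle u,x\rangle+st+r\le0$ for all $(x,t)\in\epi f$ forces $s\le0$. On the slice $s=-1$ it reads $r\le -f^*(u)$, and $s=0$ forces $u=0,\ r\le0$. So $K^\circ$ is the image of the homogenization of $\epi f^*$ under a linear isomorphism. It is therefore again pointed and full-dimensional, and by the same face analysis its shifted face dimensions form $J$.

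Finally, pick $e\in\interior K$; for instance $e=(0,f(0)+1,1)$ should work. Then $\langle y,e\rangle<0$ for every $y\in K^\circ\setminus\{0\}$. After a linear change of coordinates sending $e$ to the last basis vector with its orthogonal complement as $\bR^n$, define
\[
  C:=\{c\in\bR^n:(c,1)\in K\}.
\]
This $C$ is a compact base of $K$ containing $0$ in its interior. The standard slice identity gives $C^\circ=\{y:(y,-1)\in K^\circ\}$, which is a compact base of $K^\circ$. Nonempty faces of a base correspond to nonzero faces of the cone with dimension lowered by one, so $\sigma(C)=I$ and $\sigma(C^\circ)=J$.

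The main obstacle is the face description of $K$ after taking the closure. One must check that no faces appear beyond cones over faces of $\epi f$, the ray $R$, and $K$ itself. In particular, a face containing $R$ must be all of $K$ or $R$ itself, because no proper face of $\epi f$ is unbounded. I would verify this via supercoercivity: any sequence $\lambda_k(x_k,t_k,1)$ with $\lambda_k\to0$ can only converge to a point of $R$. This uses that $f(x)/\|x\|\to\infty$ forces $\lambda_kx_k\to0$.
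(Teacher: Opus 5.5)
Your route is the paper's: Theorem~\ref{thm:optimization-realization} with $m=n-1$, Lemma~\ref{lem:epi} for both epigraphs, homogenization, identification of $K^\circ$ with a linear image of the homogenized $\epi f^*$, and slicing. Your face analysis of $K$ (the closure adds only $R$, and a face containing $R$ is $R$ or $K$) is more explicit than the paper's.

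\textbf{The gap is in the last step.} From $e\in\interior K$ you correctly get $\langle y,e\rangle<0$ on $K^\circ\setminus\{0\}$. But this only yields $0\in\interior C$, i.e.\ compactness of the slice of $K^\circ$. Compactness of $C$ itself is the \emph{dual} condition $\langle z,e\rangle>0$ for all $z\in K\setminus\{0\}$, i.e.\ $-e\in\interior K^\circ$. Whatever linear coordinate change you use with $e\mapsto e_{n+1}$ and $e^\perp\mapsto\bR^n\times\{0\}$, your $C$ is affinely isomorphic to $K\cap\{z:\langle z,e\rangle=\|e\|^2\}$, whose recession cone is $K\cap e^\perp$.

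Your $e=(0,f(0)+1,1)$ can violate this, since $\langle e,(0,1,0)\rangle=f(0)+1$. Take for instance $f(x)=\|x\|^2-100$. Nothing in Theorem~\ref{thm:optimization-realization} excludes $f(0)<-1$, because its conclusions are invariant under adding constants to $f$. When $f(0)<-1$, the segment from $e$ to $(0,1,0)$ contains a nonzero point of $K\cap e^\perp$. Then $C$ is unbounded, and the slice identity $C^\circ=\{y:(y,-1)\in K^\circ\}$, which presupposes $K=\cone(C\times\{1\})$, is no longer available.

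\textbf{The repair is short.} Adding a constant to $f$ changes neither $S_f$ nor $S_{f^*}=\partial f$, so you may assume $f>0$. Then $t\ge f(x)>0$ on $\epi f$ and $f(0)+1>0$ give $\langle z,e\rangle>0$ on $K\setminus\{0\}$, and your $e$ works. Alternatively, take any $e\in\interior K\cap\interior(-K^\circ)$. This set is nonempty: a vector $w\neq0$ separating the two interiors would satisfy $w\in-K^\circ$ and $-w\in K$, forcing $\|w\|^2\le0$. The paper sidesteps the issue by taking an arbitrary compact section of $K_f$ and translating it so that $0$ is interior.

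Two minor points:
\begin{itemize}
\item For $n=1$ you invoke Theorem~\ref{thm:optimization-realization} with $m=0$, which is outside its hypothesis $m\ge1$. Treat $n\le1$ directly, e.g.\ with $C=[-1,1]$.
\item Not every proper face of $\epi f$ is an $E_u$; only the exposed ones are. Every proper face does lie in some $E_u$, which is all you need for boundedness.
\end{itemize}
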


\begin{proof}
The case $n\le1$ is immediate, so assume $n\ge2$ and set $m:=n-1$. Apply Theorem~\ref{thm:optimization-realization} to
\[
    I_0:=I\setminus\{n\},
    \qquad
    J_0:=J\setminus\{n\}.
\]
We obtain a finite supercoercive convex function $f:\bR^m\to\bR$ such that $f^*$ is also finite and supercoercive, and
\[
    \bigcup_{u\in\bR^m}\sigma(S_f(u))=I_0,
    \qquad
    \bigcup_{x\in\bR^m}\sigma(S_{f^*}(x))=J_0.
\]
By Lemma~\ref{lem:epi},
\begin{equation}
\label{eq:epi-signatures}
    \sigma(\epi f)=I,
    \qquad
    \sigma(\epi f^*)=J.
\end{equation}

We homogenize the two epigraphs. Define the closed cone
\[
    K_f:=\cl\,\cone\{(x,t,1)\in\bR^m\times\bR\times\bR:t\ge f(x)\}.
\]
Since $f$ is finite and supercoercive, $K_f$ is a proper cone and
\[
    K_f\cap(\bR^{m+1}\times\{0\})
    =\{(0,t,0):t\ge0\}.
\]
Apart from this ray and the zero face $\{0\}$, every proper face of $K_f$ is of the form
\[
    \cone(F\times\{1\}),
\]
where $F$ is a proper face of $\epi f$. Consequently, every compact section of $K_f$ is an $n$-dimensional convex body with facial dimension signature $I$.

Moreover, a direct computation gives
\[
    K_f^\circ
    =\cl\,\cone\{(u,-1,s)\in\bR^m\times\bR\times\bR:s\le-f^*(u)\}.
\]
Because $f^*$ is finite and supercoercive, the same argument shows that every compact section of $K_f^\circ$ has facial dimension signature $J$.

Let $C\subseteq\bR^n$ be affinely isomorphic to a compact section of $K_f$, with $0\in\interior C$. Then $C^\circ$ is affinely
isomorphic to a compact section of $K_f^\circ$. Hence
\[
    \sigma(C)=I,
    \qquad
    \sigma(C^\circ)=J.
\]
Therefore, $C$ satisfies the desired properties, completing the construction.
\end{proof}

\begin{remark}
Theorem~\ref{thm:polar-realization} answers the second open question posed in \cite[Section~4.4]{RoshchinaTuncel2025}. The proof also explains why Fenchel conjugacy appears naturally: the prescribed signatures are first realized by the epigraphs of $f$ and $f^*$ and are then transferred to compact sections by homogenization.
\end{remark}

\section{A dimension obstruction under facial exposedness}
\label{sec:obstruction}

We now turn to a restriction on primal--polar facial dimension signatures under facial exposedness. The results complement the complete freedom in Theorem~\ref{thm:polar-realization}.

Let $C\subseteq\bR^n$ be a convex body with $0\in\interior C$. For a proper face $F\unlhd C$, its conjugate face is
\[
    F^\Delta:=\{u\in C^\circ:\langle u,x\rangle=1,\ \forall x\in F\}.
\]
The set $F^\Delta$ is an exposed face of $C^\circ$, and
\begin{equation}
\label{eq:dim-conjugate}
    \dim F+\dim F^\Delta\le n-1.
\end{equation}
If $C$ is facially exposed, then $F^{\Delta\Delta}=F$ for every proper face $F\unlhd C$. Consequently, for proper faces $F,G\unlhd C$,
\[
    F\subsetneq G\quad\Longrightarrow\quad G^\Delta\subsetneq F^\Delta.
\]
See \cite[Section~2.1]{Schneider2014} for these standard properties. We first record a simple consequence.

\begin{proposition}
\label{prop:positive-face-duality}
Let $C\subseteq\bR^n$ be a convex body with $0\in\interior C$, and suppose that both $C$ and $C^\circ$ are facially exposed. Then
\[
    \sigma(C)=\{0,n\}\quad\Longleftrightarrow\quad\sigma(C^\circ)=\{0,n\}.
\]
\end{proposition}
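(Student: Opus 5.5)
By the symmetry $C^{\circ\circ}=C$ (valid since $C$ is a convex body with $0\in\interior C$), it suffices to prove one implication. We show: if $\sigma(C)=\{0,n\}$, then $\sigma(C^\circ)=\{0,n\}$. We argue by contradiction, assuming that $\sigma(C)=\{0,n\}$ while $C^\circ$ has a proper face $G$ with $\dim G\ge1$.

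First, note that $\sigma(C)=\{0,n\}$ means every proper face of $C$ is a single point. Since $C^\circ$ is facially exposed, the conjugate-face correspondence applies to $C^\circ$. Thus $G^{\Delta\Delta}=G$, and inclusions between proper faces of $C^\circ$ are strictly reversed by $\Delta$. Here $G^\Delta$ is the exposed face of $C^{\circ\circ}=C$ consisting of those $x\in C$ with $\langle u,x\rangle=1$ for all $u\in G$.

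Next, choose two distinct faces inside $G$ and compare their conjugates. Since $\dim G\ge1$, the set $G$ has an extreme point $v$. The singleton $\{v\}$ is a proper face of $G$, and hence a proper face of $C^\circ$, because a face of a face is a face. We have $\{v\}\subsetneq G$. By the strict reversal, $G^\Delta\subsetneq\{v\}^\Delta$, and both are faces of $C$. Now $G^\Delta\neq\emptyset$: otherwise $G=G^{\Delta\Delta}=\emptyset^\Delta=C^\circ$, contradicting that $G$ is proper. Also $\{v\}^\Delta\ne C$, since otherwise $\langle v,x\rangle=1$ for all $x\in C$, which is impossible because $0\in C$.

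Hence $\{v\}^\Delta$ is a proper face of $C$ that strictly contains the nonempty face $G^\Delta$. It therefore contains at least two points, so $\dim\{v\}^\Delta\ge1$. This contradicts $\sigma(C)=\{0,n\}$, and the implication follows.

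The main point needing care is the strict reversal together with the nonemptiness of $G^\Delta$. Both follow from $G^{\Delta\Delta}=G$ for proper faces of the facially exposed set $C^\circ$, with $\Delta$ computed relative to $C^{\circ\circ}=C$. If a direct argument for strictness is wanted: if $G^\Delta=\{v\}^\Delta$, then applying $\Delta$ gives $G=\{v\}$, a contradiction.
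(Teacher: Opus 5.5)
Your proof is correct and is essentially the paper's argument with the roles of $C$ and $C^\circ$ interchanged. The paper takes a point of $\relbd F$ for a positive-dimensional proper face $F\unlhd C$, passes to its minimal face $G\subsetneq F$, and uses facial exposedness of $C$ to get $F^\Delta\subsetneq G^\Delta$; you do the same with an extreme point $v$ of $G\unlhd C^\circ$, and your explicit checks that $G^\Delta\ne\emptyset$ and $\{v\}^\Delta\ne C$ supply details the paper leaves implicit.
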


\begin{proof}
Suppose that $F\unlhd C$ is a positive-dimensional proper face. Choose $x\in\relbd F$, and let $G$ be the minimal face of $C$ containing $x$. Then $G\subsetneq F$, so $F^\Delta\subsetneq G^\Delta$. Hence $G^\Delta$ is a positive-dimensional proper face of $C^\circ$. The converse follows by symmetry.
\end{proof}

We next introduce the singular sets used in the dimension argument. For $x\in\bdry C$, write
\[
    x^\Delta:=\{u\in C^\circ:\langle u,x\rangle=1\},
\]
and, for $r\in\{0,\ldots,n-1\}$, define the $r$-singular set
\[
    \mathcal S_r(C):=\{x\in\bdry C:\dim x^\Delta\ge r\}.
\]
A theorem of Anderson and Klee \cite{AndersonKlee1952} implies that $\mathcal S_r(C)$ can be covered by countably many compact sets of finite $(n-r-1)$-dimensional Hausdorff measure. We write $\mathcal H^s$ for $s$-dimensional Hausdorff measure and $\dimH$ for Hausdorff dimension. We use the standard facts that if $K$ is an $a$-dimensional compact convex set with $a\ge1$, then
\begin{equation}
\label{eq:relbd-measure}
    \dimH(\relbd K)=a-1,\qquad 0<\mathcal H^{a-1}(\relbd K)<\infty,
\end{equation}
see \cite[Section~4.2]{Schneider2014}. And a set covered by countably many sets of finite $\mathcal H^s$-measure cannot contain an uncountable pairwise disjoint family of measurable subsets with positive $\mathcal H^s$-measure.

\begin{theorem}
\label{thm:dimension-obstruction}
Let $n\ge3$, and let $C\subseteq\bR^n$ be a convex body with $0\in\interior C$. Suppose that $C$ and $C^\circ$ both contain positive-dimensional proper faces and both $C$ and $C^\circ$ are facially exposed. Define
\[
    a:=\min\bigl(\sigma(C)\setminus\{0,n\}\bigr),
    \qquad
    b:=\min\bigl(\sigma(C^\circ)\setminus\{0,n\}\bigr).
\]
Then
\[
    a+b\le n-1.
\]
\end{theorem}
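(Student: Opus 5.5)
The plan is to argue by contradiction: assume $a+b\ge n$, and play the Anderson--Klee covering of the singular sets $\mathcal S_r$ against the Hausdorff measure of relative boundaries of minimal positive-dimensional faces, using \eqref{eq:relbd-measure}.

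\emph{Step 1 (relative boundary points of an $a$-face are singular of order $b$).} Let $F\unlhd C$ be a proper face with $\dim F=a$. Every face of $F$ is a face of $C$, so the proper faces of $F$ have dimensions in $\sigma(C)\cap[0,a)=\{0\}$. Hence each $x\in\relbd F$ is an extreme point, and its minimal face is $\{x\}$. By facial exposedness of $C$, $\{x\}\subsetneq F$ gives $F^\Delta\subsetneq\{x\}^\Delta=x^\Delta$. Since $F$ is a proper exposed face, $F^\Delta\neq\emptyset$, so $x^\Delta$ is a positive-dimensional proper face of $C^\circ$. Its dimension lies in $\sigma(C^\circ)\setminus\{0,n\}$, hence is $\ge b$. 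Thus
\[
  \relbd F\subseteq\mathcal S_b(C).
\]
Symmetrically, since $C^\circ$ is facially exposed and $C^{\circ\circ}=C$, the relative boundary of any minimal positive-dimensional face of $C^\circ$ lies in $\mathcal S_a(C^\circ)$.

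\emph{Step 2 (the strict case $a+b>n$).} By Anderson--Klee, $\mathcal S_b(C)$ is covered by countably many sets of finite $\mathcal H^{n-b-1}$-measure, so $\dimH\mathcal S_b(C)\le n-b-1$. By \eqref{eq:relbd-measure}, $\dimH\relbd F=a-1$. Step~1 then forces $a-1\le n-b-1$, i.e.\ $a+b\le n$.

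\emph{Step 3 (the equality case $a+b=n$).} Here Step~2 gives nothing, and I will use the "no uncountable disjoint family" fact instead, working in $C^\circ$.
\begin{enumerate}
\item For each $x\in\relbd F$, the face $x^\Delta$ has dimension $\ge b\ge1$. Choose inside it a face $G_x$ that is minimal among positive-dimensional faces. Then $\dim G_x\ge b$, so $\mathcal H^{b-1}(\relbd G_x)>0$, and by the symmetric form of Step~1, $\relbd G_x\subseteq\mathcal S_a(C^\circ)$.
\item Because $a=n-b$, the set $\mathcal S_a(C^\circ)$ is covered by countably many sets of finite $\mathcal H^{b-1}$-measure.
\item Two distinct minimal positive-dimensional faces meet in at most a point, since their intersection is a proper face of each. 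Hence the sets $\relbd G_x$ are pairwise disjoint up to $\mathcal H^{b-1}$-null sets when $b\ge2$. For $b=1$ one counts endpoints directly, since $\mathcal S_a(C^\circ)$ is then countable.
\item If there are uncountably many distinct $G_x$, this contradicts the covering, which proves $a+b\le n-1$.
\end{enumerate}

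The main obstacle is guaranteeing uncountably many distinct faces $G_x$ as $x$ ranges over the uncountable set $\relbd F$ (note $a\ge1$). Points $x$ sharing the same $G$ satisfy $x\in G^\Delta\cap\relbd F$. If $G\not\subseteq F^\Delta$, then $G^\Delta\cap F$ is a face of $F$ not equal to $F$, hence a single point. So the first thing to try is choosing $G_x\subseteq x^\Delta$ with $G_x\not\subseteq F^\Delta$, using $F^\Delta\subsetneq x^\Delta$: pick a minimal positive-dimensional face of $x^\Delta$ through a point of $x^\Delta\setminus F^\Delta$. This is possible provided not every such point is extreme. If that fails, I would instead try the map $x\mapsto x^\Delta$ itself, which is injective because $x^{\Delta\Delta}=\{x\}$, and pass to the minimal-face structure there.
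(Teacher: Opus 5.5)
\textbf{Gap 1: the equality case $a+b=n$ is left open.} Your Steps 1 and 2 match the paper's proof of $a+b\le n$, and the framework of Step 3 is sound. Minimal positive-dimensional faces $G_x\subseteq x^\Delta$ do have relative boundaries in $\mathcal S_a(C^\circ)$, and two distinct ones meet in at most a point. However, the step you flag as the main obstacle is the crux of the equality case: that $x\mapsto G_x$ produces uncountably many distinct faces. The proposal offers only tentative strategies for it.

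The missing idea is the conjugate-face inequality \eqref{eq:dim-conjugate}. When $a+b=n$ it gives $\dim F^\Delta\le n-1-a=b-1$. Since $F^\Delta\ne\emptyset$ and every positive-dimensional proper face of $C^\circ$ has dimension at least $b$, $F^\Delta$ must be a single point $\{x^*\}$. Then no positive-dimensional $G_x$ can lie in $F^\Delta$. Your own observation now finishes the job: $x\in G_x^\Delta$, and $G^\Delta\cap F$ is a proper face of $F$, hence at most one point. So $x\mapsto G_x$ is injective on $\relbd F$, and you never need to reason about extreme points of $x^\Delta\setminus F^\Delta$.

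The paper uses the same singleton fact in a slightly different way. It discards the $\mathcal H^{a-1}$-null set $\mathcal S_{b+1}(C)$ so that $\dim x^\Delta=b$, which makes $x^\Delta$ itself minimal. It then shows $x_1^\Delta\cap x_2^\Delta=\{x^*\}$, which gives an exactly disjoint family $\relbd(x^\Delta)\setminus\{x^*\}$.

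\textbf{Gap 2: $\relbd F$ need not be uncountable.} You assert that $\relbd F$ is uncountable because $a\ge1$, but for $a=1$ it consists of only two points. This matters: your argument never uses $n\ge3$, yet the statement fails for $n=2$. There a polygon $C$ and its polar are both facially exposed with $a=b=1$, so $a+b=2>n-1$.

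The repair is the paper's symmetry reduction. Since $a+b=n\ge3$, we have $\max(a,b)\ge2$. The hypotheses are symmetric in $C$ and $C^\circ$ (using $C^{\circ\circ}=C$), so you may assume $a\ge2$. Your separate endpoint-counting treatment of $b=1$ then covers the remaining case $a=n-1$.
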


\begin{proof}
Fix $F\unlhd C$ with $\dim F=a$. By the minimality of $a$, any proper face of $F$ is a singleton. So, each $x\in\relbd F$ is a 0-dimensional face of $C$, and since $C$ is facially exposed,
\[
    F^\Delta\subsetneq x^\Delta.
\]
Thus, $x^\Delta$ is a positive-dimensional proper face of $C^\circ$, so $\dim x^\Delta\ge b$. Therefore,
\begin{equation}
\label{eq:rbFSb}
    \relbd F\subseteq\mathcal S_b(C).
\end{equation}
Combining with the Anderson--Klee theorem and \eqref{eq:relbd-measure}, we have
\[
    a-1=\dimH(\relbd F)\le\dimH\mathcal S_b(C)\le n-b-1,
\]
and hence $a+b\le n$.

Next, we will show that $a+b\neq n$. Suppose, to the contrary, that $a+b=n$. Since the hypotheses are symmetric in $C$ and $C^\circ$, we may assume that $a\ge2$. By \eqref{eq:dim-conjugate},
\[
    \dim F^\Delta\le n-1-a=b-1.
\]
If $\dim F^\Delta>0$, then the minimality of $b$ gives $\dim F^\Delta\ge b$, a contradiction. Hence
\[
    F^\Delta=\{x^*\}
\]
for some $x^*\in C^\circ$. Define
\[
    X:=\relbd F\setminus\mathcal S_{b+1}(C).
\]
The Anderson--Klee theorem gives $\dimH\mathcal S_{b+1}(C)\le a-2$, and hence
\[
    \mathcal H^{a-1}(\mathcal S_{b+1}(C))=0.
\]
Since $0<\mathcal H^{a-1}(\relbd F)<\infty$, the set $X$ is uncountable. Moreover, by the definition of $X$ and \eqref{eq:rbFSb},
\begin{equation}
\label{eq:dimxD}
    \dim x^\Delta=b,\qquad\forall x\in X.
\end{equation}

Let $x_1,x_2\in X$ and $x_1\neq x_2$. We claim that
\[
    x_1^\Delta\cap x_2^\Delta=F^\Delta=\{x^*\}.
\]
Indeed, $F^\Delta\subseteq x_1^\Delta\cap x_2^\Delta$ since $x_1,x_2\in X$. Conversely, if $z\in x_1^\Delta\cap x_2^\Delta$, then $F\cap z^\Delta$ is a face of $C$ containing the distinct points $x_1,x_2$. Thus it is positive-dimensional, so the minimality of $a$ yields
\[
    \dim(F\cap z^\Delta)\ge a.
\]
Since $F\cap z^\Delta\subseteq F$ and $\dim F=a$, we have $F\subseteq z^\Delta$. Hence $z\in F^\Delta=\{x^*\}$ and $x_1^\Delta\cap x_2^\Delta=\{x^*\}$. It follows that the sets
\[
    D_x:=\relbd(x^\Delta)\setminus\{x^*\},\qquad x\in X
\]
form an uncountable pairwise disjoint family. By \eqref{eq:dimxD},
\[
    \mathcal H^{b-1}(D_x)>0,\qquad\forall x\in X.
\]
Now fix $x\in X$ and $z\in D_x$. By \eqref{eq:dimxD} and the minimality of $b$, any proper face of $x^\Delta$ is a singleton. So, $\{z\}\subsetneq x^\Delta$ is a 0-dimensional face of $C^\circ$, and thus
\[
    (x^\Delta)^\Delta\subsetneq z^\Delta.
\]
Moreover, since $C$ is facially exposed and $\{x\}$ is a face of $C$, we have $(x^\Delta)^\Delta=\{x\}$. Thus $z^\Delta$ is a positive-dimensional proper face of $C$, and hence $\dim z^\Delta\ge a$. Therefore,
\[
    D_x\subseteq\mathcal S_a(C^\circ),\qquad\forall x\in X.
\]

By the Anderson--Klee theorem, $\mathcal S_a(C^\circ)$ can be covered by countably many compact sets of finite $\mathcal H^{b-1}$-measure. This contradicts the existence of the uncountable pairwise disjoint family $\{D_x:x\in X\}$ with $\mathcal H^{b-1}(D_x)>0$. Therefore $a+b\ne n$, and hence $a+b\le n-1$.
\end{proof}

\begin{corollary}
\label{cor:RT-open}
Let $n\ge3$. There is no convex body $C\subseteq\bR^n$ with $0\in\interior C$ such that both $C$ and $C^\circ$ are facially exposed and
\[
    \sigma(C)=\{0,n-1,n\}.
\]
\end{corollary}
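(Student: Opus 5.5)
The plan is to argue by contradiction using Theorem~\ref{thm:dimension-obstruction}. Suppose $C\subseteq\bR^n$ is a convex body with $0\in\interior C$, both $C$ and $C^\circ$ are facially exposed, and $\sigma(C)=\{0,n-1,n\}$. Since $n\ge3$, we have $n-1\ge2>0$. Hence $C$ has a positive-dimensional proper face, namely one of dimension $n-1$.

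The first step is to show that $C^\circ$ also has a positive-dimensional proper face. Since $\sigma(C)\ne\{0,n\}$, Proposition~\ref{prop:positive-face-duality} gives $\sigma(C^\circ)\ne\{0,n\}$. As $C^\circ$ is again a convex body with $0$ in its interior, $\sigma(C^\circ)$ always contains $0$ and $n$. Therefore $\sigma(C^\circ)$ contains some $d$ with $0<d<n$. Thus the hypotheses of Theorem~\ref{thm:dimension-obstruction} are satisfied.

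Next I compute the two minima. We have $a=\min(\sigma(C)\setminus\{0,n\})=n-1$. Also $b=\min(\sigma(C^\circ)\setminus\{0,n\})\ge1$. Theorem~\ref{thm:dimension-obstruction} gives $a+b\le n-1$, that is, $n-1+b\le n-1$, so $b\le0$. This contradicts $b\ge1$.

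I do not expect a real obstacle. The only point needing care is checking that $C^\circ$ has a positive-dimensional proper face, and Proposition~\ref{prop:positive-face-duality} supplies exactly this. It relies on the facial exposedness of both $C$ and $C^\circ$, which is assumed.
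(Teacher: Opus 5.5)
Your proposal is correct and matches the paper's argument: you use Proposition~\ref{prop:positive-face-duality} to show that $C^\circ$ has a positive-dimensional proper face, so $b\ge1$. Then Theorem~\ref{thm:dimension-obstruction} with $a=n-1$ gives $n-1+b\le n-1$, which is a contradiction.
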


\begin{proof}
Suppose that such a convex body $C$ exists. By Proposition~\ref{prop:positive-face-duality}, $C^\circ$ contains a positive-dimensional proper face. Thus
\[
    b:=\min\bigl(\sigma(C^\circ)\setminus\{0,n\}\bigr)
\]
is well-defined and $b\ge1$. Theorem~\ref{thm:dimension-obstruction} yields
\[
    n-1+b\le n-1,
\]
a contradiction.
\end{proof}

\begin{remark}
Corollary~\ref{cor:RT-open} gives a negative answer to the first open problem in \cite[Section~4.4]{RoshchinaTuncel2025}. This contrasts with Theorem~\ref{thm:polar-realization}, which shows that, in the absence of facial exposedness assumptions, the facial dimension signatures of a convex body and its polar can be prescribed independently. Thus, while polarity alone imposes no compatibility condition on the two signatures, simultaneous facial exposedness forces the restriction in Theorem~\ref{thm:dimension-obstruction}.
\end{remark}

\begin{acknowledgements}
The authors thank Vera Roshchina for helpful discussions on facial dimension signatures of convex sets.
\end{acknowledgements}

\section*{Statements and Declarations}
{\bf Conflict of interest}
The authors have no competing interests.

\bibliographystyle{spmpsci}
\bibliography{references}

\end{document}